\newcolumntype{P}[1]{>{\centering\arraybackslash}p{#1}}
\newcolumntype{M}[1]{>{\centering\arraybackslash}m{#1}}
\newtheorem{theorem}{Theorem}[section]
\newtheorem{proposition}[theorem]{Proposition}
\newtheorem{corollary}[theorem]{Corollary}
\newtheorem{lemma}[theorem]{Lemma}
\theoremstyle{definition}
\newtheorem{example}[theorem]{Example}
\newtheorem{remark}[theorem]{Remark}
\newtheorem{notation}[theorem]{Notation}
\newcommand{\PP}{\mathbb{P}}
\newcommand{\CC}{\mathbb{C}}
\newcommand{\ZZ}{\mathbb{Z}}
\newcommand{\cO}{\mathcal{O} }
\newcommand{\cK}{\mathcal{K} }
\newcommand{\cM}{\mathcal{M} }
\newcommand{\cN}{\mathcal{N} }
\newcommand{\cU}{\mathcal{U} }
\newcommand{\rE}{\mathrm{E} }
\newcommand{\rIE}{\mathrm{IE} }
\newcommand{\rH}{\mathrm{H} }
\newcommand{\rP}{\mathrm{P} }
\newcommand{\rIP}{\mathrm{IP} }
\newcommand\bD{\mathbf{D}}
\newcommand\bE{\mathbf{E}}
\newcommand\bF{\mathbf{F}}
\newcommand\bH{\mathbf{H}}
\newcommand\bS{\mathbf{S}}
\def\Ext{\mathrm{Ext} }
\def\Gr{\mathrm{Gr} }
\def\lr{\rightarrow}
\newcommand{\ses}[3]{0\lr{#1}\lr{#2}\lr{#3}\lr 0}
\begin{document}

\title{Double lines in the quintic del Pezzo fourfold}

\author{Kiryong Chung}
\address{Department of Mathematics Education, Kyungpook National University, 80 Daehakro, Bukgu, Daegu 41566, Korea}
\email{krchung@knu.ac.kr}

\keywords{Rational curves, Fano variety, Desingularization, Intersection cohomology}
\subjclass[2020]{14E15, 14E08, 14M15, 32S60.}

\begin{abstract}
Let $Y$ be the del Pezzo $4$-fold defined by the linear section $\textrm{Gr}(2,5)$ by $\mathbb{P}^7$. In this paper, we classify the type of normal bundles of lines in $Y$ and describe its parameter space. As a corollary, we obtain the desigularized model of the moduli space of stable maps in $Y$. Also we compute the intersection Poincar\'e polynomial of the stable maps space.
\end{abstract}
\maketitle

\section{Introduction}
\subsection{Motivation}

In previous series of papers \cite{CK11, CHK12, CM17}, the authors completely solved the comparison problem of different moduli spaces (i.e., the stable maps space, Hilbert scheme of curves and the stable sheaves space) of rational curves in a homogeneous variety $X$ when the degree of curves is $\leq 3$. As a result, we obtain the moduli theoretic birational model (in the sense of log minimal model program) and compute the cohomology group of the moduli spaces.
In this case, the convexity of $X$ provides the mild singularity of the moduli space of stable maps and thus one can set it as the initial point of comparison. But many of Fano varieties are not convex. As such a toy example is the minimal compactification of $\CC^3$: the quintic del Pezzo $3$-fold $W_5$ and Mukai variety $W_{22}$. In the case of $W_5$, our starting point of the comparison is the Hilbert scheme (which is isomorphic to the moduli space of stable sheaves). In \cite{Chu19}, we obtain the desingularized model of the moduli space of stable maps in $W_5$. In this paper, as well-known example of the minimal compactification of $\CC^4$, we study the rational curves in quintic del Pezzo $4$-fold $Y$ which is unique up to isomorphism. We deal with the first non-trivial case, that is, the degree two rational curves in $Y$. We obtain the desingularized model of stable maps space and thus its intersection cohomology group. Similar to the $3$-fold case (\cite{Chu19}), the crucial part is to classify types of the normal bundle of a line in $Y$. In general, the geometry of lines in Fano variety has been an important role for determining the geometric properties of Fano variety (\cite{PZ16, KPS18, PCS19}).
\subsection{Results}
Unless otherwise stated we define the quintic del Pezzo $4$-fold $Y$ by the linear section $\Gr(2,5)$ by $\{p_{12}-p_{03}=p_{13}-p_{24}=0\}$ where $\{p_{ij}\}$ are the Pl\"ucker coordinate of $\PP^9$ under the Pl\"ucker embedding $\Gr(2,5)\subset \PP^9$. It is known that the normal bundle $N_{L/Y}$ of a line $L$ in $Y$ is one of the following types (\cite[Lemma 1.6]{PZ16})
\[
N_{L/Y}\cong \cO_L(1)\oplus \cO_L^{\oplus 2}\; \mathrm{or} \; \cO_L(-1)\oplus \cO_L(1)^{\oplus 2}.
\]
Let us call the line of the first case (resp. the second case) by free line (resp. non-free line). Let $\bH_d(Y)$ be the Hilbert scheme of curves $C$ with Hilbert polynomial $\chi(\cO_C(m))=dm+1$ in $Y$.
Let us define the \emph{double line} $L^2$ as the non-split extension sheaf $F$ ($\cong\cO_{L^2}$)
\[
\ses{\cO_L(-1)}{F}{\cO_L},
\]
where $L$ is a line.
A double line $L^2$ in $Y$ supported on $L$ is classified by $\Ext^1(\cO_L, \cO_L(-1))\cong \rH^0(N_{L/Y}(-1))$. Hence the double line $L^2$ for a (resp. non-free) free line $L$ in $Y$ is unique (resp. isomorphic to $\PP^1$). The main result of this paper is the following.
\begin{theorem}\label{mainthm}
Let $\bD(Y)$ be the locus of double lines in $\bH_2(Y)$. Then $\bD(Y)$ is a $4$-dimensional smooth subvariety of $\bH_2(Y)$.
\end{theorem}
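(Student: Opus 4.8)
The plan is to realize $\bD(Y)$ as a generic fiber of an incidence family over the space of codimension-two linear sections, and then to conclude from generic smoothness together with the uniqueness of $Y$. First I would build the ambient \emph{smooth} parameter space of all double lines in $\Gr(2,5)\subset\PP^9=\PP(\wedge^2\CC^5)$. Every line $L\subset\Gr(2,5)$ is the pencil $\{V_1\subset W\subset V_3\}$ attached to a unique flag $V_1\subset V_3$ with $\dim V_1=1$, $\dim V_3=3$; all such lines are projectively equivalent, and $T\Gr(2,5)|_L=\cHom(\mathcal S|_L,\mathcal Q|_L)$ gives $N_{L/\Gr(2,5)}\cong\cO_L(1)^{\oplus 3}\oplus\cO_L^{\oplus 2}$, so $N_{L/\Gr(2,5)}(-1)$ has $h^0=3$, $h^1=0$ for \emph{every} line $L$. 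Hence, writing $p\colon\mathcal L\to\bH_1(\Gr(2,5))=\mathrm{Fl}(1,3;5)$ for the universal line and $\mathcal N$ for its relative normal bundle, $p_*(\mathcal N(-1))$ is locally free of rank $3$, and the locus of double lines in $\Gr(2,5)$ is the projective bundle $\PP(p_*(\mathcal N(-1)))$, a smooth irreducible variety of dimension $8+2=10$ — a double line supported on $L$ with extension $\ses{\cO_L(-1)}{F}{\cO_L}$ being exactly a surjection $N_{L/\Gr(2,5)}^{\vee}\twoheadrightarrow\cO_L(-1)$ up to scalar.

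Next, from $\ses{\cO_L}{\cO_{L^2}(1)}{\cO_L(1)}$ one reads $h^0(\cO_{L^2}(1))=3$, $h^1=0$, so every double line $L^2$ spans a unique plane $\langle L^2\rangle\subset\PP^9$; this defines a morphism $\Pi\colon\PP(p_*(\mathcal N(-1)))\to\Gr(3,10)$ to the Grassmannian of planes in $\PP^9$. For a codimension-two subspace $W\subset\wedge^2\CC^5$, set $Y_W:=\Gr(2,5)\cap\PP(W)$; a double line $L^2\subset\Gr(2,5)$ lies in $Y_W$ iff $L^2\subset\PP(W)$ iff $\langle L^2\rangle\subseteq W$. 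Thus $\bD(Y_W)$ is, set-theoretically, the fiber over $[W]$ of the incidence variety
\[
\mathcal X=\{(L^2,W):\langle L^2\rangle\subseteq W\}\ \subset\ \PP(p_*(\mathcal N(-1)))\times\Gr(8,10).
\]
Projecting to the first factor exhibits $\mathcal X$ as the $\Gr(5,\wedge^2\CC^5/\langle L^2\rangle)\cong\Gr(5,7)$-bundle over $\PP(p_*(\mathcal N(-1)))$, so $\mathcal X$ is smooth and irreducible of dimension $10+10=20$, and $q\colon\mathcal X\to\Gr(8,10)$ is dominant onto this $16$-dimensional Grassmannian.

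By generic smoothness in characteristic zero there is a dense open $B^{\circ}\subset\Gr(8,10)$ over which $q$ is smooth, so for $[W]\in B^{\circ}$ the fiber $\bD(Y_W)$ is smooth of pure dimension $20-16=4$. By Bertini (applied twice to $|\mathcal O(1)|$ on $\Gr(2,5)$) the locus $\Omega\subset\Gr(8,10)$ of $[W]$ with $Y_W$ smooth is dense open, and for such $W$ the variety $Y_W$ is a quintic del Pezzo fourfold, hence isomorphic to $Y$. Choosing $[W_0]\in\Omega\cap B^{\circ}$, an isomorphism $Y_{W_0}\cong Y$ induces $\bH_2(Y_{W_0})\cong\bH_2(Y)$ carrying $\bD(Y_{W_0})$ to $\bD(Y)$; since $\bD(Y_{W_0})=q^{-1}([W_0])$ is smooth of dimension $4$, so is $\bD(Y)$, and its connectedness (hence irreducibility) follows from that of $\bH_1(Y)$ via the support morphism $\bD(Y)\to\bH_1(Y)$, whose fibers are points or $\PP^1$'s.

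The step I expect to require the most care is the scheme-theoretic bookkeeping underlying this reduction: that the double-line locus in $\bH_2(\Gr(2,5))$ is \emph{literally} the smooth projective bundle $\PP(p_*(\mathcal N(-1)))$ (the classifying morphism is bijective by the extension analysis, but one must see it is a locally closed immersion), and that the natural scheme structure on $\bD(Y)\subset\bH_2(Y)$ agrees with, or is at least generically reduced along, $q^{-1}([W])$, so that the genericity transfers. A more self-contained alternative — parallel to the quintic del Pezzo threefold case — would be to describe $\bH_1(Y)$ explicitly, show that the non-free locus $\Sigma\subset\bH_1(Y)$ is smooth of codimension two, identify the $\PP^1$ of double lines over a non-free line $L$ with $\PP(N_{\Sigma/\bH_1(Y)})$, and deduce $\bD(Y)\cong\Bl_{\Sigma}\bH_1(Y)$; there the genuine obstacle is that last identification, i.e. matching the extension parameter $\PP\,\rH^0(N_{L/Y}(-1))$ with the projectivized normal bundle of $\Sigma$.
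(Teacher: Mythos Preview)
Your argument is correct in outline but follows a route entirely different from the paper's. The paper works explicitly inside the birational diagram $\bS(Y)\leftarrow\widetilde{\bS}(Y)\to\bH_2(Y)$ of Proposition~\ref{bidiagconic}: it identifies the preimage $\bar{\bD}(Y)\subset\bS(Y)$ of the double-line locus as a $\PP^1$-fibration over a rank-$3$ quadric threefold $Q_3\subset\Gr(4,5)$ (via a direct computer-algebra computation of the fiberwise quadric $\Gr(2,V_4)\cap H_1\cap H_2$), then tracks this locus through the blow-up along $\PP^1\sqcup\PP^1\subset\bar{\bD}(Y)$ and the subsequent blow-down, invoking the Fujiki--Nakano criterion to conclude smoothness of the image $\bD(Y)$. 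Your approach instead builds the global family of double lines in $\Gr(2,5)$ as a $\PP^2$-bundle over $\mathrm{Fl}(1,3;5)$, forms the incidence variety over $\Gr(8,10)$, and appeals to generic smoothness together with the uniqueness (up to isomorphism) of the smooth quintic del Pezzo fourfold. This is cleaner and computation-free, and it would adapt to other linear sections; by contrast, the paper's explicit approach produces the concrete birational description of $\bD(Y)$ (its relation to $Q_3$, to $\mathrm{Sing}(Q_3)$, and to the blow-up centers) that feeds directly into the subsequent identification of the non-free locus in Corollary~\ref{maincor} and the Poincar\'e polynomial computation in Corollary~\ref{inter}---information your genericity argument does not supply. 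The caveat you flag is the right one: the identification of the double-line locus in $\bH_2(\Gr(2,5))$ with $\PP(p_*(\cN(-1)))$ \emph{as schemes} is the one step needing care, though it is routine once one checks that the classifying morphism is unramified (both sides being smooth of the same dimension). Your sketched alternative $\bD(Y)\cong\Bl_{\Sigma}\bH_1(Y)$ is in fact close in spirit to what the paper's explicit model encodes, and pursuing it would recover the structural information your main argument forgoes.
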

By combining the result of \cite{CHL18} and Theorem \ref{mainthm}, it turns out that non-free lines in $Y$ consist of lines meeting with the \emph{dual conic} $C_{v}^{\vee}$ at a point (Corollary \ref{mainprop}). Furthermore we obtain the designularized model (i.e., a subvariety of \emph{complete conics}) of the moduli space of stable maps of degree $2$ in $Y$, which enable to compute the intersection cohomology group of the moduli space. For detail description, see Corollary \ref{inter}.

\begin{notation}
\begin{itemize}
\item Let us denote by $\Gr(k,n)$ the Grassimannian variety parameterizing $k$-dimensional subspaces in a fixed vector space $V$ with $\dim V=n$.
\item We sometimes do not distinguish the moduli point $[x]\in \cM$ and the object $x$ parameterized by $[x]$ when no confusion can arise.
\item Let us shortly denote the projectivized linear subspace $\PP(e_i,\cdots, e_j)=\PP(\text{span}\{e_i,\cdots, e_j\})$ in $\PP(V_5)$ where $\{e_0,e_2,\cdots, e_4\}$ is the standard basis of the vector space $V_5$ of dimension $5$.
\end{itemize}
\end{notation}

\subsection*{Acknowledgements}
The author gratefully acknowledges the many helpful suggestions of Sang-Bum Yoo and Joonyeong Won during the preparation of the paper. 


\section{Preliminaries}\label{sec:preliminaries}
In this section we collect some facts about the quintic del Pezzo fourfold which are mostly taken from \cite{Pro94} and \cite{CHL18}.

One can define a Schburt variety relating with lines and planes in $\Gr(2,5)$ as follow. For fixed a flag $p\in \PP^1\subset \PP^2\subset \PP^3\subset \PP^{4}$, let
\begin{itemize}
\item $\sigma_{3,2}=\{\ell\,|\, p\in\ell\subset \PP^2\}$,
\item $\sigma_{3,1}=\{\ell\,|\, p\in \ell\subset \PP^3\}$,
\item $\sigma_{2,2}=\{\ell\,|\, \ell\subset \PP^2\}$.
\end{itemize}
Clearly, $\sigma_{3,2}$ is a line in $\Gr(2,5)$ and thus it is parameterized by $\Gr(1,3,5)$.
Also, we note that the planes in $\text{Gr}(2,5)$ with $\sigma_{3,1}$(resp. $\sigma_{2,2}$)-type is parameterized by the \emph{flag} variety $\text{Gr}(1, 4, 5)$ (resp. $\text{Gr}(3,5)$). The projection maps $v_1:\Gr(1,3,5)\to \Gr(1,5)$ and $v_2:\Gr(1,4,5)\to \Gr(1,5)$ are called by the \emph{vertex map}. In \cite{Pro94}, the Hilbert scheme of lines and planes in $Y$ are explicitly described. For a projective variety $X$ with fixed embedding in $\PP^N$, let $\bH_1(X)$ (resp. $\bF_2(X)$) be the Hilbert scheme of lines (resp. planes) in $X$
\begin{proposition}[\protect{\cite[Proposition 2.7]{Pro94}}]\label{linespace}
Let $i:\bH_1(Y)\subset \bH_1(\Gr(2,5))= \Gr(1,3,5)$ be the inclusion map and $v_1: \Gr(1,3,5)\lr \Gr(1,5)$ be the vertex map. Then the composition map $v_1\circ i: \bH_1(Y)\to \Gr(1,5)$ is a smooth blow-up along the smooth conic $C_{v}\subset \Gr(1,5)$.
\end{proposition}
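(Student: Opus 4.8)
The plan is to realize $\bD(Y)$ as the image of an explicit parameter space fibered over $\bH_1(Y)$ and then prove smoothness by a deformation‑theoretic computation carried out inside $\bH_2(Y)$. Let $\cL\subset \bH_1(Y)\times Y$ be the universal line, $p$ the projection to $\bH_1(Y)$, and $\cF:=\cExt^1_p(\cO_{\cL},\cO_{\cL}(-1))$ the relative $\cExt$‑sheaf, whose fiber at $[L]$ is $\Ext^1(\cO_L,\cO_L(-1))\cong \rH^0(N_{L/Y}(-1))$. The universal non‑split extension over $\PP(\cF)$ produces a family of double lines, hence a morphism $\Phi\colon \PP(\cF)\to \bH_2(Y)$ whose set‑theoretic image is $\bD(Y)$. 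By the $\Ext$‑computation recalled in the introduction, $\cF$ is a line bundle over the free locus and jumps to rank two over the non‑free locus $\Sigma\subset \bH_1(Y)$, so the projection $\PP(\cF)\to\bH_1(Y)$ is one‑to‑one over free lines and a $\PP^1$‑bundle over $\Sigma$. Since $\bH_1(Y)$ is a smooth irreducible fourfold (Proposition \ref{linespace}) and the free locus is dense, $\PP(\cF)$ is irreducible of dimension $4$. The goal is to show that $\Phi$ is a closed immersion whose image $\bD(Y)$ is smooth of dimension $4$.

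To see that $\Phi$ identifies $\PP(\cF)$ with a subvariety of $\bH_2(Y)$, I would first check injectivity on points — distinct classes in $\PP(\Ext^1(\cO_L,\cO_L(-1)))$ give non‑isomorphic ribbon structures on $L$, and double lines on distinct lines have distinct supports — and then injectivity on tangent vectors. For the latter I compute the Zariski tangent space of the Hilbert scheme at a double line. Writing $F=\cO_{L^2}$ and using $0\to I_{L^2}\to\cO_Y\to F\to 0$ together with $\rH^0(F)=\CC$, $\rH^1(F)=0$, the local‑to‑global sequence collapses to $T_{[L^2]}\bH_2(Y)=\Hom_{\cO_Y}(I_{L^2},F)\cong \Ext^1_{\cO_Y}(F,F)$. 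I would evaluate $\Ext^1_{\cO_Y}(F,F)$ by filtering $F$ through $0\to\cO_L(-1)\to F\to\cO_L\to 0$ and inserting the four blocks $\Ext^\bullet_{\cO_Y}(\cO_L(a),\cO_L(b))\cong\bigoplus_{i+j=\bullet}\rH^i(\wedge^j N_{L/Y}(b-a))$ into the resulting long exact sequences; comparing the image of $d\Phi$ with the double‑structure‑preserving part of $\Ext^1(F,F)$ then shows that $d\Phi$ is injective. This realizes $\bD(Y)\cong \PP(\cF)$ as a closed subvariety of $\bH_2(Y)$ and reduces the theorem to the smoothness of $\PP(\cF)$.

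Over the free locus $\cF$ is invertible, so there $\PP(\cF)\cong \bH_1(Y)$ is smooth of dimension $4$. The entire difficulty is concentrated along the non‑free locus $\Sigma$, where $\cF$ fails to be locally free. I would show that $\Sigma$, cut out by the jumping of $\rH^0(N_{L/Y}(-1))$ (equivalently by the support of $R^1p_*(\cN(-1))$, with $\cN$ the relative normal bundle of $\cL$), is a smooth surface of codimension $2$ in $\bH_1(Y)$, and that $\cF$ degenerates transversally along it, i.e. $\cF\cong I_\Sigma\otimes(\text{line bundle})$ with $R^1p_*(\cN(-1))$ the twisted structure sheaf of $\Sigma$. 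Granting this, the standard identification $\PP(I_\Sigma)=\Bl_\Sigma\bH_1(Y)$ exhibits $\PP(\cF)$ as the blow‑up of a smooth fourfold along a smooth surface, hence smooth of dimension $4$, and the exceptional $\PP^1$‑bundle matches the $\PP^1$ of double lines lying over each non‑free line.

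The main obstacle is exactly this transversality of the degeneration along $\Sigma$: a priori $\rH^0(N_{L/Y}(-1))$, and with it $T_{[L^2]}\bH_2(Y)=\Ext^1(F,F)$, jumps as $L$ specializes to a non‑free line, and one must show that the sublocus $\bD(Y)$ nonetheless stays smooth of dimension $4$ rather than acquiring an extra tangent direction. Concretely this amounts to proving that the non‑free condition is imposed with the expected codimension and multiplicity, so that the cokernel $R^1p_*(\cN(-1))$ is reduced and supported on a smooth surface. I expect to establish this by a local analysis of how $N_{L/Y}$ varies in a transverse family through a non‑free line — realizing $\Sigma$ geometrically via the vertex map $v_1$ and the conic $C_v$ of Proposition \ref{linespace} — which is the step I anticipate will require the most care.
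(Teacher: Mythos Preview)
Your proposal does not address the stated Proposition~\ref{linespace} at all: that proposition (the blow-up description of $\bH_1(Y)$) is quoted from \cite{Pro94} and is used by you as an input, not proved. What you have written is a proof outline for Theorem~\ref{mainthm}, the smoothness of the double-line locus $\bD(Y)\subset\bH_2(Y)$. I will review it as such.

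\medskip

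Your route is genuinely different from the paper's. The paper never works over $\bH_1(Y)$; it passes to the auxiliary model $\bS(Y)=\Gr(3,\cK)$ of Proposition~\ref{bidiagconic}, locates the pre-image $\bar\bD(Y)$ there as the rank~$\le 1$ locus of the restricted Pl\"ucker quadric, computes in explicit Pl\"ucker coordinates that $\bar\bD(Y)$ is a $\PP^1$-fibration over a singular quadric $Q_3\subset\Gr(4,5)$, and then follows the blow-up/down $\bS(Y)\leftarrow\widetilde{\bS}(Y)\to\bH_2(Y)$ to conclude smoothness of $\bD(Y)$ via the Fujiki--Nakano criterion. Your approach instead parametrizes double lines by $\PP(\cF)$ with $\cF=p_*\cN(-1)$ over $\bH_1(Y)$, and aims to identify $\PP(\cF)$ with $\Bl_\Sigma\bH_1(Y)$. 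Conceptually this is attractive: it explains the geometry of $\bD(Y)$ intrinsically in terms of lines and their normal bundles, and it would yield Corollary~\ref{mainprop} and Theorem~\ref{mainthm} in one stroke rather than deriving the former from the latter.

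\medskip

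The substantive gap is exactly where you flag it, and it is more serious than you indicate. You need three facts simultaneously: that $\Sigma$ is a \emph{smooth} surface, that $R^1p_*(\cN(-1))$ is an invertible sheaf on $\Sigma$ (so the degeneration is ``simple''), and that the resulting identification $\cF\cong I_\Sigma\otimes L$ holds. None of these is available to you a priori. In the paper the description of $\Sigma$ (Corollary~\ref{mainprop}) is obtained \emph{after} Theorem~\ref{mainthm}, and its proof uses the explicit quadric equations \eqref{defofquad1} computed in the course of proving the theorem; so invoking that description inside your argument would be circular. You would therefore have to classify non-free lines and verify the transversality of the $h^1$-jump by a direct normal-bundle computation along the $\mathrm{Aut}(Y)$-orbits of Proposition~\ref{autooribit}, including at the codimension-one stratum (d) inside $\Sigma$ where two of the five orbit types meet. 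That is doable but is precisely the kind of explicit local analysis that the paper's coordinate approach via $\bS(Y)$ is designed to replace; without it your outline does not yet establish smoothness of $\PP(\cF)$, and hence not of $\bD(Y)$.
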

Let us call $C_{v}$ by the \emph{vertex conic} in Proposition \ref{linespace}.
\begin{proposition}[\protect{\cite[Proposition 2.2]{Pro94}}]\label{planespace1}
The Hilbert scheme of planes in $Y$ is isomorphic to
\[
\bF_2(Y)\cong C_{v}\sqcup \{[S]\}.
\]
Here each point $t\in C_{v}(\cong\PP^1)$ parameterizes the $\sigma_{3,1}$-type planes $P_t$ such that the vertex of the plane $P_t$ is the point $\{t\}$ in $C_{v}$. Also the point $[S]$ parameterizes the $\sigma_{2,2}$-type plane $S$ in $Y$ determined by the linear spanning $\langle C_{v}\rangle\subset \Gr(1,5)\cong \PP^4$ of $C_{v}$.
\end{proposition}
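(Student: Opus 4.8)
The plan is to start from the classical description of planes in the ambient Grassmannian and then intersect with the two linear Pl\"ucker conditions defining $Y$, handling the two Schubert types of planes separately. Every $\PP^2\subset\Gr(2,5)$ is of $\sigma_{3,1}$-type $\{\ell\,|\,p\in\ell\subset\PP^3\}$ or of $\sigma_{2,2}$-type $\{\ell\,|\,\ell\subset\PP^2\}$, and these assemble into the two components $\Gr(1,4,5)$ and $\Gr(3,5)$ of $\bF_2(\Gr(2,5))$; I would take this as the starting point. Writing $\lambda_1=p_{12}-p_{03}$ and $\lambda_2=p_{13}-p_{24}$ for the two forms cutting out $Y$, a plane $P$ lies in $Y$ iff $\lambda_1|_P=\lambda_2|_P=0$, a closed condition on each component, and the task is to solve these two incidence problems.

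For the $\sigma_{3,1}$-type I would parameterize by a flag $\langle a\rangle=V_1\subset V_4$ with free coordinate $b\in V_4$, so that $\lambda_i(a\wedge b)=\langle\alpha_i(a),b\rangle$ with $\alpha_1(a)=(a_3,-a_2,a_1,-a_0,0)$ and $\alpha_2(a)=(0,-a_3,a_4,a_1,-a_2)$. Vanishing for all $b\in V_4$ forces $V_4\subseteq\ker\alpha_1(a)\cap\ker\alpha_2(a)$, and since $\dim V_4=4$ this requires the $2\times 5$ matrix with rows $\alpha_1(a),\alpha_2(a)$ to have rank $\leq 1$; computing its $2\times 2$ minors collapses the condition to $a_2=a_3=0$ and $a_1^2+a_0a_4=0$, a smooth conic in $\PP(e_0,e_1,e_4)$ that I would identify with $C_v$. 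For each such $a$ the subspace $V_4$ is recovered uniquely as the rank-one common kernel, so the plane is determined by its vertex $t=[a]$; this produces the first component $C_v\cong\PP^1$, with $P_t$ having vertex $t$.

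For the $\sigma_{2,2}$-type I would parameterize by $V_3$ and observe that $\lambda_i|_{\wedge^2 V_3}=0$ says exactly that $V_3$ is isotropic for the alternating form $B_i$ on $V_5$ corresponding to $\lambda_i$. Both $B_i$ have rank $4$, with radicals $\langle e_4\rangle$ and $\langle e_0\rangle$. Since a $3$-dimensional isotropic subspace of a rank-$4$ alternating form must contain its radical, $V_3\supseteq\langle e_0,e_4\rangle$; imposing the remaining isotropy relations on a third generator pins it to $e_1$, so $V_3=\langle e_0,e_1,e_4\rangle=\langle C_v\rangle$ is forced. Hence there is a single $\sigma_{2,2}$-plane $S=\Gr(2,\langle C_v\rangle)$, exactly as stated.

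Since the two Schubert types are disjoint inside $\bF_2(\Gr(2,5))$, this yields $\bF_2(Y)=C_v\sqcup\{[S]\}$ set-theoretically. The hard part will be upgrading this to the asserted isomorphism of schemes, i.e.\ showing that the linear conditions cut out both incidence loci transversally, so that $\bF_2(Y)$ is reduced with $C_v$ a smooth conic and $[S]$ a reduced isolated point. A dimension count is consistent, since $\dim\Gr(1,4,5)=7$ against the $2\times 3=6$ linear conditions gives the $1$-dimensional $C_v$, while $\dim\Gr(3,5)=6$ against $6$ conditions gives an isolated point; but the genuine content is the transversality, which I would verify by computing the tangent spaces $\rH^0(P,N_{P/Y})$ (equivalently, checking that the Jacobian of the defining equations has full rank along each locus). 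Establishing these normal-bundle dimensions, and in particular the vanishing that makes $[S]$ a reduced isolated point, is the main technical step.
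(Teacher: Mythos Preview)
The paper does not actually prove this proposition: it is quoted from \cite[Proposition~2.2]{Pro94} as a preliminary fact, and the paper only remarks that \cite[Section~6]{CHL18} reproduces it in the explicit Pl\"ucker coordinates $p_{12}-p_{03}=p_{13}-p_{24}=0$. Your proposal is therefore not competing with a proof in the paper but is supplying one, and it is essentially the coordinate computation of \cite{CHL18} carried out directly. Your identification of the $\sigma_{3,1}$-locus with the conic $a_2=a_3=0$, $a_0a_4+a_1^2=0$ and of the unique $\sigma_{2,2}$-plane with $V_3=\langle e_0,e_1,e_4\rangle$ matches exactly the descriptions the paper records immediately after the proposition (the displayed equation for $C_v$ and Remark~\ref{planeeq}).

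Two small comments. First, in the $\sigma_{3,1}$ case you should also note that the vertex $[a]$ automatically lies in the recovered $V_4$ (since $\alpha_i(a)(a)=\lambda_i(a\wedge a)=0$), so the flag condition $V_1\subset V_4$ is satisfied and the map $t\mapsto P_t$ is genuinely a section over $C_v$. Second, your caution about upgrading the set-theoretic equality to a scheme isomorphism is well placed; neither the present paper nor the statement in \cite{Pro94} dwells on this, but your suggested check via $\rH^0(P,N_{P/Y})$ (or, equivalently, the Jacobian rank of your explicit equations on $\Gr(1,4,5)$ and $\Gr(3,5)$) is the standard way to finish, and it goes through without difficulty in these coordinates.
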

Let $\{e_0,e_1,e_2,e_3, e_4\}$ be the standard coordinate vectors of the space $V_5(\cong \CC^5)$, which provides the original projective space $\PP(V_5)(=\PP^4)$. Let $\{p_{ij}\}_{0\leq i<j\leq 4}$ be the Pl\"ucker coordinates of $\PP^9$. Let $\PP^7=H_1\cap H_2$ be the linear subspace of $\PP^9$ defined by $p_{12}-p_{03}=p_{13}-p_{24}=0$. The vertex conic $C_{v}$ is given by (\cite[Lemma 6.3]{CHL18})
\[
C_{v}=\{[a_0:a_1:a_2:a_3:a_4]\mid a_0a_4+a_1^2=a_2=a_3=0\}\subset \PP(V_5).
\]
\begin{remark}\label{planeeq}
From the proof of \cite[Lemma 6.3]{CHL18}, we know that $\sigma_{3,1}$-type planes $P_t$ in $Y$ are $P_t=\PP(V_1\wedge V_4)$ where $V_1=\text{span}\{e_0+te_1-t^2e_4\}$ and $V_4=\text{span}\{e_0,e_1,e_2+te_3,e_4\}$. Also the unique plane $S$ in $Y$ is given by $S=\PP(\wedge^2 V_3)$ such that $V_3=\text{span}\{e_0,e_1,e_4\}$.
\end{remark}
The positional relation of planes in $Y$ are as follows.
\begin{proposition}[\protect{\cite[Proposition 2.2]{Pro94}}]\label{planespace2}
Let $P_t$ be a $\sigma_{3,1}$-type plane and $S$ be the unique $\sigma_{2,2}$-type plane in $Y$. Then
\begin{enumerate}
\item the intersection part $P_t\cap S$ is tangent line of the dual conic\footnote{That is, the curve is generated by the tangent lines of $C_{v}$.} $C_{v}^{\vee}$ in $Y$.
\item the intersection part $P_t\cap P_{t'}$ is a point in $S$ for any $t\neq t' \in C_{v}$.
\end{enumerate}
\end{proposition}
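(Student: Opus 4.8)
The plan is to reduce both assertions to explicit linear algebra, using the coordinate description of $P_t$ and $S$ from Remark \ref{planeeq} together with the equation of $C_{v}$, and to translate the Grassmannian picture for $S$ into the dual projective plane of $\PP(V_3)$. First I would record the chain of containments $V_1(t)\subset V_3\subset V_4(t)$, valid for every $t$, where $V_1(t)=\text{span}\{e_0+te_1-t^2e_4\}$, $V_4(t)=\text{span}\{e_0,e_1,e_2+te_3,e_4\}$ and $V_3=\text{span}\{e_0,e_1,e_4\}$; all three inclusions are immediate from the generators, since $e_0+te_1-t^2e_4\in V_3$ and $e_0,e_1,e_4\in V_4(t)$. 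By Proposition \ref{planespace1} the vertex of $P_t$ is the point $p(t):=[1:t:0:0:-t^2]\in C_{v}$ (which indeed satisfies $a_0a_4+a_1^2=-t^2+t^2=0$). Since $\dim V_3=3$, every element of $\wedge^2 V_3$ is decomposable, so $S=\PP(\wedge^2 V_3)$ is the dual projective plane $\PP(V_3)^{\vee}$ of lines in $\PP(V_3)$, with $[v\wedge w]$ corresponding to the line $\overline{[v][w]}$; the dual conic $C_{v}^{\vee}\subset S$ is then the locus of tangent lines $T_t=\text{span}\{e_0+t^2e_4,\,e_1-2te_4\}$ of $C_{v}$.

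For assertion (1), a point of $P_t\cap S$ is a two-dimensional subspace $W$ with $V_1(t)\subset W\subset V_4(t)$ and $W\subset V_3$; by the containments above this is equivalent to $V_1(t)\subset W\subset V_3$, so $P_t\cap S$ is the pencil $\{[p(t)\wedge w]\mid w\in V_3\}$, that is, the line in $\PP(V_3)^{\vee}$ consisting of all lines through $p(t)$. Projective duality ($C_{v}^{\vee\vee}=C_{v}$) identifies this pencil with the tangent line to $C_{v}^{\vee}$ at its point of tangency $T_t$, which is exactly (1). I would also supply a self-contained check in coordinates $[x:y:z]$ on $\wedge^2 V_3$ dual to $(e_0\wedge e_1,\,e_0\wedge e_4,\,e_1\wedge e_4)$: the pencil is cut out by $t^2x+ty-z=0$ and $C_{v}^{\vee}$ by $y^2+4xz=0$, and substituting the former into the latter produces the perfect square $(y+2tx)^2=0$, exhibiting a double contact at $T_t=[1:-2t:-t^2]$.

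For assertion (2), a point of $P_t\cap P_{t'}$ is a two-dimensional $W$ with $V_1(t)+V_1(t')\subseteq W\subseteq V_4(t)\cap V_4(t')$. For $t\neq t'$ the points $p(t),p(t')$ are distinct, so $V_1(t)+V_1(t')$ is already two-dimensional and $W=\text{span}\{p(t),p(t')\}$ is forced; since $p(t),p(t')\in V_3\subset V_4(t)\cap V_4(t')$, this $W$ is admissible and lies in $S$. Hence $P_t\cap P_{t'}$ is the single point $[p(t)\wedge p(t')]=[1:-(t+t'):-tt']\in S$, namely the chord of $C_{v}$ through $p(t)$ and $p(t')$. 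The computations are routine linear algebra; the only delicate point is the duality step in (1), where one must be certain the pencil through $p(t)\in C_{v}$ is tangent to---rather than merely secant to---$C_{v}^{\vee}$. I would settle this by the explicit double-root computation above, which is cleaner and fully self-contained, and record as a consistency check that the point found in (2) satisfies $y^2+4xz=(t-t')^2\neq 0$, confirming the chord is a genuine secant lying off $C_{v}^{\vee}$.
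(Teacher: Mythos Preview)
Your argument is correct. The paper does not supply its own proof of this proposition; it quotes the result from \cite{Pro94} and merely remarks afterwards that \cite{CHL18} reproduces it in the specific coordinate system fixed here. Your computation is exactly in that spirit: you take the explicit descriptions of $P_t$ and $S$ from Remark~\ref{planeeq}, verify the chain $V_1(t)\subset V_3\subset V_4(t)$, and reduce both intersections to elementary linear algebra in $\wedge^2 V_3$. The identification of $S=\PP(\wedge^2 V_3)$ with the dual plane $\PP(V_3)^{\vee}$, the equation $y^2+4xz=0$ for $C_v^{\vee}$, the pencil equation $t^2x+ty-z=0$, and the double-root check $(y+2tx)^2=0$ are all accurate; likewise the computation $V_4(t)\cap V_4(t')=V_3$ for $t\neq t'$ and the resulting secant point $[1:-(t+t'):-tt']$ with $y^2+4xz=(t-t')^2\neq 0$ are correct. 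One small clarification you might add: in part~(2) you implicitly use that $V_4(t)\cap V_4(t')$ is exactly $V_3$ (not larger), which follows since $e_2+te_3$ and $e_2+t'e_3$ are linearly independent modulo $V_3$ when $t\neq t'$; stating this explicitly would make the argument airtight. Otherwise your proof is complete and more detailed than anything the paper itself provides.
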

The lines in $Y$ have a stratification relating with the plane's type in $Y$.
\begin{proposition}[\protect{\cite[Corollary 3.7]{Pro94}}]\label{autooribit}
Let $L$ be a line in $Y$ and $R=\bigcup\limits_{t\in C_{v}} P_t$ be the union of planes in $Y$. Then there are five types of lines in $Y$ such that the automorphism group $\mathrm{Aut}(Y)$ of $Y$ transitively acts on each stratum.
\begin{enumerate}[(a)]
\item $L\nsubseteq R\cup S$.
\item $L\subset R$, $L\cap S=\{\mathrm{pt}.\}$ and $L\cap C_{v}^{\vee}=\emptyset$.
\item $L\subset R$, $L\cap S=L \cap C_{v}^{\vee}=\{\mathrm{pt}.\}$.
\item $L\subset S$ and $L$ is a tangent line of $C_{v}^{\vee}$.
\item $L\subset S$ and $L\cap C_{v}^{\vee}=\{p_1, p_2\}$ for $p_1\neq p_2$.
\end{enumerate}
\end{proposition}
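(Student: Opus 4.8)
The plan is to read the five types off the blow-up description of $\bH_1(Y)$ in Proposition \ref{linespace} and to realize each type as a single orbit of $G:=\mathrm{Aut}(Y)$. The natural setup is that $Y$ is cut out of $\Gr(2,V_5)$ by the pencil spanned by the two linear Pl\"ucker equations $p_{12}-p_{03}$ and $p_{13}-p_{24}$, regarded as skew forms $\omega_1,\omega_2$ on $V_5$; hence $G$ is the image in $\mathrm{PGL}(V_5)$ of the subgroup of $\GL(V_5)$ stabilizing the pencil $\langle\omega_1,\omega_2\rangle$. A direct computation shows that the kernels of the members of the pencil sweep out exactly the vertex conic $C_{v}$, so the induced $G$-action on $\Gr(1,5)=\PP(V_5)=\PP^4$ preserves $C_{v}$, hence its linear span $\Pi:=\langle C_{v}\rangle=\PP(V_3)$, and it lifts along the vertex map $v_1\circ i$ to the blow-up $\bH_1(Y)\cong\Bl_{C_{v}}\PP^4$. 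Since $R$, $S$ and $C_{v}^{\vee}$ are intrinsic to $Y$, the conditions (a)--(e) cut out $G$-invariant subsets, and the proof reduces to (i) checking that (a)--(e) partition $\bH_1(Y)$ and (ii) showing $G$ is transitive on each stratum.

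For (i) I would set up a dictionary between line types and the stratification of $\Bl_{C_{v}}\PP^4$. A $\sigma_{3,2}$-type line $L$ has a vertex $p\in\PP^4$, and: if $p\notin\Pi$ then $L$ is the unique line over $p$ and lies in neither $R$ nor $S$ (both $S$-lines and $R$-lines have vertex in $\Pi$), giving type (a); if $p\in\Pi\setminus C_{v}$ then the unique line over $p$ is the pencil of lines of $\Pi$ through $p$, which lies in $S\cong\Gr(2,V_3)$ and meets $C_{v}^{\vee}$ in the two distinct tangents to $C_{v}$ through $p$, giving the secant type (e). Over $p=t\in C_{v}$ the exceptional fibre is the $\PP^2$ of lines of the plane $P_t$; inside it the distinguished line $\tau_t=P_t\cap S$ is tangent to $C_{v}^{\vee}$ by Proposition \ref{planespace2}(1), giving type (d); the lines through the tangency point $q_t=\tau_t\cap C_{v}^{\vee}$ give type (c); and the remaining lines of $P_t$ give type (b). Using Remark \ref{planeeq} and Proposition \ref{planespace2} one checks these loci are disjoint and exhaust $\bH_1(Y)$, with dimensions $4,3,2,1,2$ for (a)--(e).

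For (ii) I would produce enough of $G$ from the explicit pencil-stabilizer. The diagonal torus preserving $\langle\omega_1,\omega_2\rangle$ is cut out by $t_1t_2=t_0t_3$ and $t_1t_3=t_2t_4$, whence $t_1^2=t_0t_4$, and it acts on $C_{v}=\{(1,s,0,0,-s^2)\}$ by scaling $s$; supplementing this with the unipotent automorphisms of the pencil one gets a subgroup surjecting onto a group acting transitively on $C_{v}\cong\PP^1$, on $\Pi\setminus C_{v}$, and on $\PP^4\setminus\Pi$. Transitivity on the vertex locus immediately gives transitivity on types (a) and (e), where $L$ is determined by $p$. For types (b), (c), (d) one fixes $t\in C_{v}$ and studies the stabilizer $G_t$ acting on the fibre $P_t\cong\PP^2$: it fixes $\tau_t$ and $q_t$, hence preserves the three loci, and one checks it is transitive on the open locus (b) and on the punctured pencil (c); combined with transitivity on $C_{v}$ this yields transitivity on each full stratum.

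The main obstacle is precisely this last transitivity on the exceptional fibres. It forces one to determine $G$, and in particular its unipotent part, explicitly enough to compute the induced action of $G_t$ on $P_t\cong\PP^2$ and to verify it is transitive on the complement of the pencil through $q_t$ while fixing $\tau_t$ and $q_t$. A clean way to organize this is a dimension count: with $\dim G=8$, matching the orbit dimensions $\dim G-\dim\mathrm{Stab}$ against the stratum dimensions $4,3,2,1,2$ shows each orbit is dense in its (irreducible) stratum. The delicate final point is to upgrade density to equality, i.e. to rule out a smaller orbit degenerating inside a stratum, and it is here that the explicit unipotent automorphisms are indispensable.
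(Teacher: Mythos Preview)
The paper does not prove this proposition; it is quoted from \cite[Corollary 3.7]{Pro94} and used as background, so there is no argument in the present paper to compare your attempt against.

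Judged on its own terms, your outline is sound in its geometry. The dictionary between the five conditions (a)--(e) and the natural stratification of $\Bl_{C_{v}}\PP^4$ is correct and matches the explicit representatives in Table~\ref{linesinY}: in particular the identification of the exceptional fibre over $t\in C_{v}$ with the dual plane of $P_t$, and the reading of (b), (c), (d) via incidence with $\tau_t$ and $q_t$, are accurate. Transitivity of $G$ on $\PP^4\setminus\Pi$, on $\Pi\setminus C_{v}$, and on $C_{v}$ also holds and disposes of (a), (e), (d) once one knows that $G$ surjects onto the $\mathrm{PGL}_2$ acting on $C_{v}$.

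The genuine gap is exactly the one you flag yourself: for (b) and (c) one must show that the stabilizer $G_t$ acts with the expected two orbits on $P_t^{\vee}\setminus\{[\tau_t]\}$. Your dimension count only gives that each orbit is dense in its stratum, and you correctly note that upgrading density to equality requires producing the unipotent part of $G$ explicitly and computing its action on $P_t$. This is not a formality; it is where the actual content of the proposition lives. So as written your proposal is a coherent plan that stops short of a proof at precisely the point where the real computation begins.
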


In Section $6$ of \cite{CHL18}, the authors reproduce the results of Proposition \ref{linespace}, \ref{planespace1}, and \ref{planespace2} by specifying the linear subspace $\PP^7\subset \PP^9$.
\begin{example}
Let $P_{t_0}$ be the plane determined by the vertex $\PP(e_0)$ and the three dimensional space $\PP(e_0,e_1,e_2,e_4)$. The intersection point is $P_{t_0}\cap C_{v}^{\vee}=\PP(e_0\wedge e_1)$ which is the tangent line of $C_{v}$ at $\PP(e_0)$.
Furthermore, the example of lines in Proposition \ref{autooribit} are given in Table \ref{linesinY}.
\begin{table}[!ht]
\begin{tabular}{|l|l|p{4cm}|p{4cm}|}
\hline
Type&Vertex&\;\;\;\;\;\;\;\;\;\;\; Plane\\
\hline
\hline
\;(a) &$\PP(e_2)$&$\;\;\;\;\;\;\PP(e_0,e_2,e_3)$\\
\hline
\;(b) &$\PP(e_0)$&$\;\;\;\;\;\;\PP(e_0,e_2,e_4)$\\
\hline
\;(c) &$\PP(e_0)$&$\;\;\;\;\;\;\PP(e_0,e_1,e_2)$\\
\hline
\;(d) &$\PP(e_0)$&$\;\;\;\;\;\;\PP(e_0,e_1,e_4)$\\
\hline
\;(e) &$\PP(e_1)$&$\;\;\;\;\;\;\PP(e_0,e_1,e_4)$\\
\hline
\end{tabular}
\caption{Example of lines in $Y$}
\label{linesinY}
\end{table}
\end{example}
Let $\bH_2(Y)$ be the Hilbert scheme of conics in $Y$. For a general conic $C$ in $Y$, it determines linear spanning in two meanings: the linear space $\PP^2$ containing $C$ in $\PP(\wedge^2V_5)=\PP^9$ and the linear space $\PP^3$ containing two skew lines in $\PP(V_5)=\PP^5$. Motivated this observation, we have a birational model of $\bH_2(Y)$ as follow. Let $\cU$ be the universal subbundle on $\Gr(4,5)$ and
\[\cK:= \mathrm{ker}\{\wedge^2\cU\subset \wedge^2 \cO^{\oplus 5}\to \cO^{\oplus 2} \}\] be the kernel of the composition map where the arrow is given by $\{p_{12}-p_{03},p_{13}-p_{24}\}$. Let $\bS(Y):=\Gr(3, \cK)$ be the relative Grassmannian over $\Gr(4,5)$.
\begin{proposition}[\protect{\cite[Proposition 6.7 and Remark 6.8]{CHL18}}]\label{bidiagconic}
Under above definition and notation, $\bH_2(Y)$ is obtained from $\bS(Y)$ by a blow-down followed by a blow-up
\begin{equation*}
\xymatrix{
&\widetilde{\bS}(Y)\ar[rd]\ar[ld]&\\
\bS(Y)\ar@{-->}[rr]^{\Psi}&& \bH_2(Y),
}
\end{equation*}
where
\begin{enumerate}
\item the blow-up center in $\bS(Y)$ (resp. $\bH_2(Y)$) is a disjoint union $\PP^1\sqcup \PP^1$ (resp. $\PP^5$)  of $\PP^1$'s and
\item the space $\widetilde{\bS}(Y)$ is a relative conics space over $\Gr(4,5)$ such the fiber over $\Gr(4,5)$ is the Hilbert scheme $\bH_2(\Gr(2,4)\cap H_1\cap H_2)$ of conics in the quadric surface $\Gr(2,4)\cap H_1\cap H_2$.
\end{enumerate}
\end{proposition}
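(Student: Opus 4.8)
The plan is to construct $\widetilde{\bS}(Y)$ as a relative Hilbert scheme of conics and to recognise the two projections in the diagram as blow-ups by tracking where the quadric-surface fibres degenerate. Over $\Gr(4,5)$ I form the relative quadric surface $\cQ\subset\PP(\cK)$ whose fibre over $[U]$ is $Q_{[U]}:=\Gr(2,U)\cap H_1\cap H_2\subset\PP(\cK_U)\cong\PP^3$, and set $\widetilde{\bS}(Y):=\bH_2(\cQ/\Gr(4,5))$, the relative Hilbert scheme of conics; this yields assertion (2) by construction once flatness and the fibre description are verified. Both $\bS(Y)=\Gr(3,\cK)$ and $\bH_2(Y)$ have dimension $7$, and $\widetilde{\bS}(Y)$ carries two tautological maps: the spanning map $\pi\colon\widetilde{\bS}(Y)\to\bS(Y)$ sending a conic $C\subset Q_{[U]}$ to its linear span $\langle C\rangle\in\Gr(3,\cK_U)$, and the forgetful map $\Phi\colon\widetilde{\bS}(Y)\to\bH_2(Y)$ sending $C\subset Q_{[U]}\subset Y$ to the conic $C\subset Y$. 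The claimed birational map is $\Psi=\Phi\circ\pi^{-1}$.

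First I would analyse $\Phi$. A conic $C\subset Y\subset\Gr(2,5)$ is a one-parameter family of lines in $\PP(V_5)$, and any $U$ admitting $C$ must satisfy that $\PP(U)$ contains the ruled surface these lines sweep out, so the span of that surface controls the fibre. When the lines sweep a quadric scroll (the generic conic) or a cone over a conic (the case $C\subset P_t$, where the lines pass through the vertex $\PP(V_1)$ and span $\PP(V_4)$), this span is a genuine $\PP^3$ and fixes $[U]$ uniquely, so $\Phi$ is a local isomorphism there. The span collapses to a plane exactly when the lines fill a $\PP^2$, that is when $C\subset S=\PP(\wedge^2 V_3)$; then every $U$ with $V_3\subset U$ is admissible and the fibre of $\Phi$ is the pencil $\{U: V_3\subset U\}\cong\PP^1$. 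Hence $\Phi$ is birational, an isomorphism away from the $\PP^5$ of conics in $S\cong\PP^2$, and I would identify it as the blow-up of $\bH_2(Y)$ along this $\PP^5$ by exhibiting the exceptional locus as a Cartier divisor with the predicted $\PP^1$-bundle structure.

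Next I would analyse $\pi$. Over the locus where $Q_{[U]}$ is smooth the span map is the classical bijection between conics and cutting planes, so $\pi$ is an isomorphism there. The fibres jump precisely where $Q_{[U]}$ acquires a plane component, forcing rank $\leq 2$; and since any plane inside $Q_{[U]}\subset Y$ is a plane of $Y$, hence $S$ or some $P_t$ by Proposition \ref{planespace1}, while two distinct $P_t$'s already span $V_5$, the only possibility is $Q_{[U]}=S\cup P_{t(U)}$ with $U=V_4(t(U))\supset V_3$. Thus the rank-$\leq 2$ locus is exactly the pencil $\{U:V_3\subset U\}\cong\PP^1$, and over such $[U]$ the conics lying in each of the two plane components $S$ and $P_{t(U)}$ form two $\PP^5$'s, each collapsed by $\pi$ to the single span point $[\wedge^2 V_3]$, respectively $[V_1\wedge V_4]$, of $\Gr(3,\cK_U)$. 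As $[U]$ runs over the pencil these two points trace out disjoint sections $\PP^1_S$ and $\PP^1_P$ of $\bS(Y)\to\Gr(4,5)$ (disjoint since $S\neq P_t$), giving the center $\PP^1_S\sqcup\PP^1_P$ of assertion (1); I would then show $\pi$ is the blow-up of $\bS(Y)$ along it, the exceptional $\PP^5$-fibres being the Hilbert schemes of conics in the two plane components. The component of the exceptional locus over $\PP^1_S$ coincides with the exceptional divisor of $\Phi$ over the $\PP^5$ of conics in $S$, which is the compatibility that welds the two descriptions into one diagram.

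The main obstacle I anticipate is the local study over the degenerate pencil $\{U\supset V_3\}$. One must prove that $\widetilde{\bS}(Y)$ stays smooth across the rank-$\leq 2$ fibres, equivalently that $\bH_2(\cQ/\Gr(4,5))$ is flat with the stated reducible fibre and acquires no extraneous or embedded components in the relative setting, and then upgrade each projection from a birational morphism with the computed exceptional fibres to a genuine blow-up. For this I would compute the normal bundles of $\PP^1_S$ and $\PP^1_P$ in $\bS(Y)$ and of the $\PP^5$ in $\bH_2(Y)$, check that the tautological exceptional loci are Cartier with the expected projective-bundle structure, and appeal to the universal property of blowing up; establishing smoothness and irreducibility of $\bH_2(Y)$ along the $\PP^5$ (so that $\Phi$ is an honest contraction and not a small modification) is the delicate point the degenerate-fibre bookkeeping has to secure.
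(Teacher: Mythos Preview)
The paper does not prove this proposition; it is quoted without proof from \cite[Proposition 6.7 and Remark 6.8]{CHL18} as a preliminary input, so there is no in-paper argument to compare against. Your outline is consistent with the content recorded here and in Remark~\ref{corrbi}: the undefined locus of $\Psi$ consists of the two points $[P_t]$ and $[S]$ sitting over the pencil $C_v\cong\PP^1\subset\Gr(4,5)$, which is exactly your $\PP^1_P\sqcup\PP^1_S$, and the $\PP^5$ on the $\bH_2(Y)$ side is the Hilbert scheme of conics in the fixed plane $S$.

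One point worth sharpening in your sketch: you assert that $\pi$ is an isomorphism wherever $Q_{[U]}$ is smooth, but the span map from conics in a smooth quadric surface to cutting planes is already one-to-one when $Q_{[U]}$ has rank $3$ (a quadric cone), and even when it has rank $2$ provided the cutting plane is not one of the two plane components. So the locus where $\pi$ has positive-dimensional fibres is not the full rank-$\le 2$ locus of $[U]$'s but only the two distinguished points in each such fibre $\Gr(3,\cK_U)$ corresponding to the planes $S$ and $P_t$ themselves; this is what makes the centre two curves rather than a surface. You implicitly use this when you say the $\PP^5$'s collapse to single span points, but the earlier sentence about smoothness of $Q_{[U]}$ is misleading. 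Beyond that, the obstacles you flag (smoothness of the relative Hilbert scheme over the degenerate pencil, and upgrading the contractions to honest blow-ups via the universal property) are exactly the substance of the argument in \cite{CHL18}.
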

In special $\bH_2(Y)$ is an irreducible and smooth variety of dimension $7$. 
\begin{remark}\label{corrbi}
The relative Grassimannian $\bS(Y)=\Gr(3, \cK)$ in Proposition \ref{bidiagconic} can be regarded as the incident variety
\begin{equation}\label{incident}
\bS(Y)=\{(U_3, V_4) \mid U_3\subset \cK_{[V_4]}\}\subset \Gr(3, \wedge^2 V_5)\times \Gr(4, V_5),
\end{equation}
where $\cK_{[V_4]}=\text{ker}\{\wedge^2V_4\subset \wedge^2V_5\stackrel{(p_{12}-p_{03})\oplus (p_{13}-p_{24})}{\lr}\CC\oplus \CC\}$.
Also the birational correspondence $\Psi:\bS(Y)\dashrightarrow  \bH_2(Y)$ is $\Psi([(U_3, V_4)])=\PP(U_3)\cap \Gr(2,V_4)$. Note that the map $\Psi$ is not defined at the two distinct points $[P_t]$ and $[S]$ over a linear subspace $\PP^1(\cong C_v)$ in $\Gr(4,5)$ (\cite[Remark 6.8]{CHL18}).
\end{remark}
\section{Results}
In this section we prove Theorem \ref{mainthm}. As corollaries, we have a description of the locus of non-free lines in $Y$ (Corollary \ref{mainprop}). Also we obtain the desinguarized model of stable maps space in $Y$ and thus its intersection cohomology (Corollary \ref{inter}).
\subsection{Proof of Theorem \ref{mainthm}} 
Firstly, we describe the closure of the birational inverse $\Psi^{-1}$ of the double line in $\bH_2(Y)$ in Proposition \ref{bidiagconic}. Then we find explicitly the strict transform of the closure along the blow-up/down maps in Proposition \ref{bidiagconic}.

Let $\bar{\bD}(Y)$ be the locus of the pairs $(U_3,V_4)$ in $\bS(Y)$ such that the restriction $q_G|_{\PP(U_3)}$ to $\PP(U_3)$ of the quadric form $q_G$ associated to $\Gr(2,V_4)$ is rank $\leq 1$. 
Let \[p=p_2\circ i:\bar{\bD}(Y)\lr \Gr(4,5)\] be the composition of the second projection map $p_2:\bS(Y)\lr \Gr(4,5)$ in equation \eqref{incident} and the inclusion map $i: \bar{\bD}(Y)\subset \bS(Y)$.
\begin{lemma}
Under above definition and notations, the image $p(\bar{\bD}(Y)):=Q_3$ is an irreducible quadric $3$-fold in $\Gr(4,5)$ with the homogenous coordinates $x_0, x_1, x_2, x_3, x_4$ such that $Q_3$ is defined by $x_1^2+4x_0x_2=0$.
\end{lemma}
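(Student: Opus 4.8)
The plan is to make the quadratic form $q_G$ attached to the Plücker quadric $\Gr(2,V_4)\subset \PP(\wedge^2 V_4)=\PP^5$ completely explicit, restrict the rank-$\le 1$ condition to the fibre of $p$, and then identify the image in $\Gr(4,5)$ by eliminating the $U_3$-coordinates. First I would fix the standard basis $e_0,\dots,e_4$ of $V_5$ and coordinatize $\Gr(4,5)=\PP(V_5)\dual$ by the homogeneous coordinates $x_0,\dots,x_4$ dual to $e_0,\dots,e_4$, so that $V_4=V_4(x)$ is the hyperplane $\{x_0 y_0+\dots +x_4 y_4=0\}$. For a point $[x]\in\Gr(4,5)$ I would pick a basis of $V_4(x)$, write down $\wedge^2 V_4(x)\subset \wedge^2 V_5$, intersect with the codimension-two subspace cut out by $p_{12}-p_{03}=p_{13}-p_{24}=0$ to get the $4$-dimensional space $\cK_{[V_4]}$, and thereby express the decomposable (Plücker) quadric $q_G$ as a quadratic form on $\cK_{[V_4]}\cong\CC^4$ whose coefficients depend polynomially on $x$. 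This is the bookkeeping core of the argument; it is routine linear algebra but must be done carefully because of the two linear relations among the Plücker coordinates.

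Next I would analyze the rank of $q_G|_{\PP(U_3)}$ as $U_3$ ranges over $\Gr(3,\cK_{[V_4]})$. The generic Plücker quadric on $\wedge^2 V_4\cong \CC^6$ has rank $6$; after intersecting with the two linear relations it restricts to a quadratic form on $\cK_{[V_4]}\cong\CC^4$ whose rank drops exactly over the locus $Q_3$ we want to identify. Concretely, a $3$-plane $U_3\subset \cK_{[V_4]}$ on which $q_G$ has rank $\le 1$ exists if and only if the form $q_G|_{\cK_{[V_4]}}$ itself has rank $\le 4-2=2$ in the non-degenerate direction — more precisely, $q_G|_{\cK_{[V_4]}}$ has a $\ge 1$-dimensional radical, equivalently $\det\bigl(q_G|_{\cK_{[V_4]}}\bigr)=0$ (together with the a-priori constraint on how many independent squares survive). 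So the image $p(\bar{\bD}(Y))$ is cut out in $\Gr(4,5)$ by the vanishing of that $4\times 4$ Gram determinant, which, after the explicit computation of the first paragraph, should collapse to a single quadratic equation in $x_0,\dots,x_4$. I would then simplify this equation; using the symmetry of the configuration (the vertex conic is $C_v=\{a_0a_4+a_1^2=a_2=a_3=0\}$ by \cite[Lemma 6.3]{CHL18}, and $p$ should send $\bar{\bD}(Y)$ onto a quadric dual/secant to it) one expects precisely $x_1^2+4x_0x_2=0$, up to the stated choice of coordinates.

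Finally I would check irreducibility and dimension: the polynomial $x_1^2+4x_0x_2$ is irreducible over $\CC$ (it has rank $3$, so its zero locus is an irreducible quadric cone of dimension $3$ inside $\Gr(4,5)\cong\PP^4$), and it is visibly independent of $x_3,x_4$, which matches the geometric picture that non-free behaviour is detected only along the $\PP^2$-family coming from the planes $P_t$. To be safe I would also verify surjectivity of $p$ onto $Q_3$ by exhibiting, over a general point of $Q_3$, an actual pair $(U_3,V_4)\in\bar{\bD}(Y)$ — this is immediate once the Gram form is in hand, since rank $\le 1$ subspaces of a rank $\le 2$ quadric form always exist.

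I expect the main obstacle to be the explicit description of $\cK_{[V_4]}$ and of $q_G$ restricted to it in a basis that depends algebraically and uniformly on $[V_4]\in\Gr(4,5)$: the two Plücker relations break the symmetry, so one must be careful that the chosen trivialization of the rank-$4$ bundle $\cK$ does not introduce spurious denominators, and that the resulting Gram determinant is reduced rather than a power of the true equation. Once that normalization is fixed, the rank-drop computation and the identification of $Q_3$ with $\{x_1^2+4x_0x_2=0\}$ are short.
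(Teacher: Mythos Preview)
Your approach is essentially the paper's: both reduce the question to determining when the quadric surface $\Gr(2,V_4)\cap H_1\cap H_2\subset \PP(\cK_{[V_4]})\cong\PP^3$ is singular, i.e.\ when the associated $4\times4$ Gram determinant vanishes, and then compute this condition explicitly in the coordinates of $\Gr(4,5)$. The paper does not attempt a global trivialization of $\cK$; instead it works in the two affine charts $x_3\neq 0$ and $x_4\neq 0$, writes down $\Gr(2,V_4)\cap H_1\cap H_2$ via an elimination in Macaulay2, and reads off the rank-drop locus $b^2+4ad=0$ in each chart. This sidesteps precisely the denominator and normalization issues you correctly identify as the main obstacle.

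One correction is needed. Your claim that the existence of a $3$-plane $U_3$ with $\rk(q_G|_{U_3})\le 1$ forces $\rk(q_G|_{\cK_{[V_4]}})\le 4-2=2$ is false: the correct threshold is $\le 3$. Indeed a rank-$3$ form $x_1^2+x_2^2+x_3^2$ on $\CC^4$ restricts to rank $1$ on the hyperplane $\{x_2=ix_3\}$, while a rank-$4$ form on $\CC^4$ admits no $3$-plane with rank-$\le 1$ restriction (else $q-\ell^2$ would vanish on a $3$-plane, contradicting the bound on totally isotropic subspaces of a rank-$\ge 3$ form). Fortunately your ``more precisely'' clause---a $\ge 1$-dimensional radical, equivalently $\det(q_G|_{\cK_{[V_4]}})=0$---is exactly the rank-$\le 3$ condition, so the rest of your outline survives once this slip is fixed. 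You should also not take for granted that the determinant collapses to degree $2$ rather than higher; in the paper this is simply read off from the explicit chart computation, and in your approach it would need to be checked once the Gram matrix is written down.
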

\begin{proof}
For the chart $x_3\neq 0$, let
\[[V_4]:=\left( \begin{matrix}
1&0&0&a&0\\
0&1&0&b&0\\
0&0&1&c&0\\
0&0&0&d&1\\
\end{matrix}\right)\]
be an affine chart of $\Gr(4,5)$ with $a=x_0/x_3, b=x_1/x_3, c=x_4/x_3, d=x_2/x_3$ and $V_4=\text{span}\{e_0+ae_3, e_1+be_3, e_2+ce_3, e_4+de_3\}$. Then for  an affine chart of $\Gr(2,V_4)$
\[[V_2]=\left( \begin{matrix}
1&0&t_1&t_3\\
0&1&t_2&t_4\\
\end{matrix}\right),\]
the affine chart of $\Gr(2,V_4)$ in $\Gr(2,5)$ is
\[
[V_2][V_4]=\left( \begin{matrix}
1&0&t_1&a+ct_1+dt_3&t_3\\
0&1&t_2&b+ct_2+dt_4&t_4
\end{matrix}\right).
\]
After eliminating the variables $\{t_1,t_2,t_3,t_4\}$ by the computer program (\cite{M2}), we have a defining equation of $\Gr(2,V_4)\cap H_1\cap H_2$
\begin{equation}\label{defofquad1}
\begin{split}
\langle bcp_{01}^2+c^2p_{01}p_{02}&+cdp_{01}p_{04}-ap_{01}^2+bp_{01}p_{04}+cp_{02}p_{04}+dp_{04}^2+dp_{01}p_{14}-p_{02}p_{14},\\
\;h_1,\;h_2,\;h_3,&\;h_4,\;h_5,\;h_6 \rangle
\end{split}
\end{equation}

where
\[ 
\begin{split}
h_1&=p_{03}-p_{12}, \;h_2=p_{12}-bp_{01}-cp_{02}-dp_{04}, \;h_3=p_{13}-p_{24},\\
 h_4&=p_{23}+ap_{02}+bp_{12}-dp_{24}, \;h_5=p_{24}+ap_{01}-cp_{12}-dp_{14},\; h_6=p_{34}-ap_{04}-bp_{14}-cp_{24}
 \end{split}
 \]
in $\PP^9\times \CC_{(a,b,c,d)}$.

For the chart $x_4\neq 0$, let $a=x_0/x_4, b=x_1/x_4, u=x_3/x_4, d=x_2/x_4$. By doing the same calculation as before, we obtain the local equation of $\Gr(2,V_4)\cap H_1\cap H_2$ as follows:
\begin{equation}\label{defofquad2}
\begin{split}
\langle ap_{04}^2+ap_{01}p_{14}+bp_{04}p_{14}&-dp_{14}^2-p_{01}p_{34}-au p_{04}p_{14}-bu p_{14}^2-up_{04}p_{34}+u^2p_{14}p_{34},\\
\;k_1,\;k_2,\;k_3,&\;k_4,\;k_5,\;k_6 \rangle
\end{split}
\end{equation}
where
\[ 
\begin{split}
k_1=&p_{02}+bp_{01}+dp_{04}-up_{12}, \;k_2=p_{03}-p_{12}, \;k_3=p_{12}-ap_{01}+dp_{14}-up_{24},\\
k_4=&p_{13}-p_{24},\;k_5=p_{23}+ap_{12}+bp_{24}-dp_{34},\;k_6=p_{24}+ap_{04}+bp_{14}-u p_{34}
 \end{split}
 \]
in $\PP^9\times \CC_{(a,b,u,d)}$.

Since the restricted form $q_G$ associated to $\Gr(2,V_4)$ is of $\text{rank}(q_G|_{\PP(U_3)})\leq 1$, the quadratics of the defining equations \eqref{defofquad1} and \eqref{defofquad2} is of rank $\leq 3$. Therefore the defining equation of the image $p(\bar{\bD}(Y))$ is given by
\[
\langle b^2+4ad\rangle
\]
in both cases.
\end{proof}
Obviously, the singular locus of $\text{Sing}(Q_3)(\cong \PP^1)$ is defined by $I_{\mathrm{Sing}(Q_3)}=\langle x_0, x_1, x_2\rangle$.
\begin{proof}[Proof of Theorem \ref{mainthm}]
\textbf{Step 1}. For each $[V_4]\in Q_3\setminus \text{Sing}(Q_3)$, the quadric surface $\Gr(2,V_4)\cap H_1\cap H_2$ is of rank $3$. Hence the fiber $p^{-1}([V_4])$ is isomorphic to $\PP^1$ which parameterizes tangent planes (i.e., lines) of the quadric cone $\Gr(2,V_4)\cap H_1\cap H_2$. If $[V_4]\in\text{Sing}(Q_3)$, the singular quadric surface $\Gr(2,V_4)\cap H_1\cap H_2$ is the union of the plane $P_{t}$ and $S$. In fact, for the affine chart $x_3\neq 0$ (similarly, $x_4\neq 0$), it is defined by 
the union of $\sigma_{3,1}$-type planes:
\[
\langle 
c^2p_{01}+cp_{04}-p_{14}, p_{23},cp_{24}-p_{34},cp_{02}-p_{12},-cp_{12}+p_{24}, p_{13}-p_{24},p_{03}-p_{12}
\rangle
\]
and the $\sigma_{2,2}$-plane $S$:
\[
\langle 
p_{02}, p_{23},cp_{24}-p_{34},cp_{02}-p_{12},-cp_{12}+p_{24}, p_{13}-p_{24},p_{03}-p_{12}
\rangle
\]
which matches with Remark \ref{planeeq} (by letting $c=t$). Hence the fiber $p^{-1}([V_4])$ is isomorphic to $\PP^1$ which parameterizes planes containing the intersection line $P_{t}\cap S$. After all, $\bar{D}(Y)$ is a $\PP^1$-fiberation over $Q_3$.

\textbf{Step 2}. Note that the birational map $\Psi$ in Proposition \ref{bidiagconic} is not defined for the two points: $\{[P_{t}],[S]\}$ over $\text{Sing}(Q_3)(\cong \PP^1)$. Hence the blow-up center of $\eta:\widetilde{\bS}(Y)\lr \bS(Y)$ is contained in $\bar{\bD}(Y)$ and thus the strict transform of $\bar{\bD}(Y)$ by the blow-up map $\eta$ is nothing but the blow-up $\widetilde{\bD}(Y)$ of $\bar{\bD}(Y)$ along the center $\PP^1\sqcup \PP^1$. Since the blow-center of $\bar{\bD}(Y)$ is of $\ZZ_2$-quotient singularity, one can easily check that $\widetilde{\bD}(Y)$ is smooth and the exceptional divisor $\bE$ in $\widetilde{\bD}(Y)$ is a $\PP(1,2,2)(\cong \PP^2)$-bundle over $\PP^1\sqcup \PP^1$. Each fiber $\PP^2$ parameterizes the double line in the plane because any flat family in $\bar{\bD}(Y)$ is obviously supported on lines by its construction.

\textbf{Step 3}. The restriction to each fiber $\PP^1$ of the normal bundle $\cN_{\bE/\widetilde{\bD}(Y)}$ of the exceptional divisor $\bE$ is $\cN_{\bE/\widetilde{\bD}(Y)}|_{\PP^1}\cong \cO_{\PP^1}(-1)$, the image $\bD(Y)$ of the restriction to $\widetilde{\bD}(Y)$ of the blow-down map $\widetilde{\bS}(Y)\lr \bH_2(Y)$ is smooth by the Fujiki-Nakano criterion (\cite{FN71}). So we finish the proof.
\end{proof}

\subsection{Non-free lines in $Y$ and the intersection cohomology of stable maps}\label{interstable}
\begin{corollary}\label{mainprop}\label{maincor}
Let $Z$ be the locus of non-free lines in the Hilbert scheme $\bH_1(Y)$ of lines in $Y$. Then $Z$ is isomorphic to a $\PP^1$-fiberation over the vertex conic $C_{v}(\cong \PP^1)$.
\end{corollary}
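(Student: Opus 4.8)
The plan is to analyse the \emph{support morphism}
\[
\pi\colon \bD(Y)\lr \bH_1(Y),\qquad [L^2]\longmapsto [L],
\]
obtained by restricting the Hilbert--Chow morphism $\bH_2(Y)\to\mathrm{Chow}_1(Y)$ to $\bD(Y)$: a double line $L^2$ has associated $1$-cycle $2[L]$, so $\pi$ lands in the subvariety of cycles of this shape, which is canonically $\bH_1(Y)$. First I would record that $\pi$ is surjective and that its fibre over $[L]$ is the projectivization $\PP\big(\Ext^1(\cO_L,\cO_L(-1))\big)\cong\PP\,\rH^0(N_{L/Y}(-1))$ of the space of double-line structures on $L$ --- exactly the computation stated just before Theorem \ref{mainthm} --- which is a reduced point when $L$ is free and a $\PP^1$ when $L$ is non-free. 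Hence $\pi$ is birational, an isomorphism over the dense open locus of free lines, and the non-free locus $Z$ is precisely the locus in $\bH_1(Y)$ over which $\pi$ has one-dimensional fibres; since $\bH_1(Y)$ is smooth (Proposition \ref{linespace}), $Z$ has codimension $2$, $\dim Z = 2$, and $\pi^{-1}(Z)$ is the exceptional divisor of $\pi$.

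Next I would locate $Z$ by tracing $\pi$ through the birational picture used in the proof of Theorem \ref{mainthm}. Over $Q_3\setminus\mathrm{Sing}(Q_3)$ the surface $\Gr(2,V_4)\cap H_1\cap H_2$ is a genuine quadric cone, $\bar{\bD}(Y)$ is the $\PP^1$-bundle of its tangent planes, and $\pi$ sends a tangent plane to the ruling line along which it touches the cone; as $[V_4]$ varies these rulings sweep out the free lines bijectively. All extra fibres thus occur over the exceptional divisor $\bE\subset\widetilde{\bD}(Y)$, the $\PP(1,2,2)(\cong\PP^2)$-bundle over $\PP^1\sqcup\PP^1$, whose $\PP^2$-fibre over $t$ parameterizes the double lines supported in the plane $P_t$ (resp.\ in $S$). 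Using the local equations \eqref{defofquad1}--\eqref{defofquad2} over $\mathrm{Sing}(Q_3)$, where $\Gr(2,V_4)\cap H_1\cap H_2 = P_t\cup S$ and $P_t\cap S$ is a tangent line of $C_v^\vee$, I would identify the $\PP^1$-ruling of $\bE$ contracted by the blow-down $\widetilde{\bD}(Y)\to\bD(Y)$ and then evaluate $\pi$ on the contracted locus. What must come out is that the support lines produced are exactly the lines of $P_t$ through the tangency point $P_t\cap C_v^\vee = \{q_t\}$, and that the component of $\bE$ over the $S$-copy yields the same family (via the tangent line $\ell_t = P_t\cap S\subset P_t$). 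Granting this, $Z$ is the set of lines $L$ with $L\subset P_t$ and $q_t\in L$ for some $t\in C_v$, that is, the union of the orbits (c) and (d) of Proposition \ref{autooribit} --- the lines meeting $C_v^\vee$ in a single point.

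It remains to package $Z$ as a $\PP^1$-fibration. By Proposition \ref{planespace2}(2) distinct planes $P_t,P_{t'}$ meet only in a point, so a line contained in $R=\bigcup_t P_t$ lies in a unique $P_t$; hence $[L]\mapsto t$ is a well-defined morphism $Z\to C_v$, whose fibre over $t$ is the pencil of lines of $P_t$ through $q_t$, a $\PP^1$. Identifying $\{[L]\mid L\subset P_t\}$ with the $\PP^2$-fibre over $t\in C_v$ of the exceptional divisor of the smooth blow-up $v_1\circ i\colon\bH_1(Y)\to\Gr(1,5)$ along $C_v$ (Proposition \ref{linespace}), one sees that $Z$ is the $\PP^1$-subbundle of that $\PP^2$-bundle cut out fibrewise by the single incidence condition $q_t\in L$; in particular $Z$ is smooth, hence a $\PP^1$-fibration over $C_v\cong\PP^1$.

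The step I expect to be the main obstacle is the middle one: using the explicit equations to determine the contracted $\PP^1$-ruling of $\bE$ and the $\pi$-image of the contracted locus. Concretely, one must exclude the two natural worries --- that a generic line of $P_t$ (orbit (b)) is also non-free, which would force $\dim Z = 3$, and that the $P_t$- and $S$-copies of $C_v$ in $\bar{\bD}(Y)$ contribute two disjoint families of non-free lines rather than one --- and both only fall out after combining the incidences $P_t\cap S = \ell_t$, $P_t\cap P_{t'}=\{\mathrm{pt}\}$ of Proposition \ref{planespace2} with the local computations of Theorem \ref{mainthm}.
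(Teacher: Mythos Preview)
Your approach is structurally sound but takes a genuinely different route from the paper, and the step you flag as the main obstacle is exactly where it remains incomplete.

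The paper's proof is far more elementary: it exploits the $\mathrm{Aut}(Y)$-orbit decomposition of Proposition~\ref{autooribit}. Since $\mathrm{Aut}(Y)$ acts transitively on each of the five strata (a)--(e), it suffices to test one representative line from each row of Table~\ref{linesinY}. For the lines in $S$ (cases (d), (e)) one appeals to \cite[Proposition~6.6]{CHL18}, which says $L\subset S$ is non-free iff $L$ is tangent to $C_v^\vee$; this singles out (d). For the representative of type (c) the paper inserts its explicit equations into the family~\eqref{defofquad1} over the locus $a=b=c=0$, $d\neq 0$, observes that $L$ lies on a one-parameter family of irreducible quadric cones, and concludes $\dim\rH^0(N_{L/Y}(-1))=2$. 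A parallel check shows (a) and (b) are free. The $\PP^1$-fibration structure over $C_v$ is then read off directly: the non-free lines are those meeting $C_v^\vee$ in a single point, i.e.\ for each $t\in C_v$ the pencil of lines in $P_t$ through $q_t=P_t\cap C_v^\vee$.

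Your proposal instead analyses the support morphism $\pi\colon\bD(Y)\to\bH_1(Y)$ and reads $Z$ off as the jump locus of its fibres. The dimension count is fine (both source and target are smooth $4$-folds by Theorem~\ref{mainthm} and Proposition~\ref{linespace}, so purity of the exceptional divisor forces $\dim Z=2$), and the outer steps are correct. But the middle step --- identifying the image of the exceptional divisor $\bE$ under the blow-down and then under $\pi$ --- is precisely where the content lies, and you have not carried it out: phrases like ``what must come out is'' and ``granting this'' leave the claim that orbit~(b) is free, and that the two components of $\bE$ over the $P_t$- and $S$-copies of $\PP^1$ collapse to a \emph{single} family, unproved. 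Both worries you name are real, and neither is resolved by the incidences of Proposition~\ref{planespace2} alone; one still needs a computation in the local charts of~\eqref{defofquad1}--\eqref{defofquad2} of roughly the same weight as the paper's direct check. The paper's orbit-by-orbit argument bypasses this entirely, at the cost of being less conceptual.
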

\begin{proof}
Geometrically, non-free lines are lines in $Y$ meeting with the dual conic $C_{v}^{\vee}$ at a point uniquely. Since the automorphism of $Y$ transitively acts on each stratum of Proposition \ref{autooribit}, it is enough to check each case in Table~\ref{linesinY}. For the case (d) and (e), $L$ is non-free line if and only if $L$ is a tangent line of $C_{v}^{\vee}$ by \cite[Proposition 6.6]{CHL18}. Thus the lines of the case (d) are only non-free. For the case (c), the line $L$ is defined by $p_{03}=p_{04}=p_{12}=p_{13}=p_{14}=p_{23}=p_{24}=p_{34}=0$. Thus for the affine chart $x_3\neq 0$, it lies on irreducible quadric cones defined by $dp_{04}^2+(dp_{01}-p_{02})p_{14}=h_1=p_{12}-dp_{04}=h_3=p_{23}-dp_{24}=p_{24}-dp_{14}=p_{34}=0$ for $d\neq 0$. Hence there exists one parameter family of double lines supported on $L$. That is, $L$ is non-free. For other cases (a) and (b), we know that each line is free by a similar computation.
\end{proof}
Let $C$ be a projective connected reduced curve. A map $f: C \to Y$ is considered \emph{stable} if $C$ has at worst nodal singularities and $|\mathrm{Aut}(f)|<\infty$. Let $\cM(Y,d)$ be the moduli space of isomorphism classes of stable
maps $f:C\to Y$ with genus $g(C)=0$ and $\mathrm{deg} (f^*\cO_{Y}(1))=d$. The moduli space $\cM(Y,d)$ might be singular and reducible depending on the geometric property (for example, convexity) of $Y$.
\begin{remark}\label{extob}
Let $f:C\lr L\subset Y$ be a stable map of the degree $\deg(f)=2$ such that $L$ is non-free. From the tangent bundle sequence of $L\subset Y$, $\rH^1(f^*T_Y)\cong \rH^1(f^*N_{L/Y})\cong\rH^1(f_*\cO_C\otimes N_{L/Y})\cong\CC$. That is, $Y$ is not convex and thus $\cM(Y,2)$ is not smooth stack (\cite{FP97}).
\end{remark}
Let $X$ be a quasi-projective variety. For the (resp. intersection) Hodge-Deligne polynomial $\rE_c(X)(u,v)$ (resp. $\rIE_c(X)(u,v)$) for compactly supported (resp. intersection) cohomology of $X$, let
\[ \rP(X)=\rE_c(X)(-t,-t)\; (\mathrm{resp.}\;\rIP(X)=\rIE_c(X)(-t,-t))\]
be the \emph{virtual} (resp. intersection) Poincar\'e polynomial of $X$. A map $\pi:X\lr Y$ is \emph{small} if for a locally closed stratification of $Y=\bigsqcup_i Y_i$ such that the restriction map $\pi|_{\pi^{-1}(Y_i)}:\pi^{-1}(Y_i)\lr Y_i$ is \emph{etale} locally trivial, the inequality \[\dim \pi^{-1}(y)< \frac{1}{2}\mathrm{codim}_Y(Y_i)\] holds for each closed point $y\in Y_i$ except a dense open stratum of $Y$. Let $\pi: X\lr Y$ be a small map such that $X$ has at most finite group quotient singularities. Then $\rP(X)=\rIP(Y)$ (\cite[Definition 6.6.1 and Theorem 6.6.3]{Max18}).

\begin{corollary}\label{inter}
The intersection cohomology of the moduli space $\cM(Y,2)$ is given by
\[\rIP(\cM(Y,2))= 1+4t^{2}+10t^{4}+15t^{6}+15t^{8}+10t^{10}+4t^{12}+t^{14}.\]
\end{corollary}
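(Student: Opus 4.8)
The plan is to produce a proper birational \emph{small} morphism $\pi\colon \widetilde{\cM}\to \cM(Y,2)$ with $\widetilde{\cM}$ smooth --- the \emph{desingularized model}, realized inside a relative complete-conics space over $\Gr(4,5)$ in the spirit of Proposition \ref{bidiagconic} --- and then to apply the small-map criterion recalled above (\cite[Theorem 6.6.3]{Max18}), which gives $\rIP(\cM(Y,2))=\rP(\widetilde{\cM})$. This reduces the problem to (i) exhibiting $\pi$ and (ii) computing the virtual Poincar\'e polynomial of the smooth projective variety $\widetilde{\cM}$.

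\textbf{Singular locus and resolution.} By Remark \ref{extob} the singular points of $\cM(Y,2)$ are the stable maps $f$ with $\rH^1(f^*T_Y)\neq 0$. Using the tangent-bundle sequence of a line, the normal-bundle trichotomy (the lines of types (c), (d) of Proposition \ref{autooribit} being exactly the non-free ones, Corollary \ref{mainprop}) and a Mayer--Vietoris computation on reducible domains, this locus is seen to be stratified with top-dimensional stratum the $4$-dimensional locus $\cS$ of maps whose image is a non-free line --- an $\overline{\rM}_{0,0}(\PP^1,2)(\cong\PP^2)$-bundle over the surface $Z$ of non-free lines, so that $\mathrm{codim}_{\cM(Y,2)}\cS=3$ --- and with lower-dimensional strata coming from configurations of non-free lines meeting along coinciding ``bad'' normal directions. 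Since the obstruction $\rH^1(f^*T_Y)$ is one-dimensional, a transverse slice to each stratum is a hypersurface singularity; an explicit local computation, parallel to those in the proof of the preliminary lemma and of Theorem \ref{mainthm} (using the chart equations \eqref{defofquad1}, \eqref{defofquad2} of the degenerate quadrics and the deformation complex of $f$), shows these transverse singularities admit small resolutions --- conifold points $xy=zw$ along $\cS$. Taking strict transforms inside the relative complete-conics space over $\Gr(4,5)$ (obtained by blowing up the relative double-line locus $\widetilde{\bD}(Y)$, whose geometry over the degeneracy quadric $Q_3\subset\Gr(4,5)$ is exactly as in Steps $1$--$3$ of the proof of Theorem \ref{mainthm}) yields a smooth $\widetilde{\cM}$ with $\pi\colon\widetilde{\cM}\to\cM(Y,2)$ an isomorphism over the smooth locus and contracting $\PP^1$'s over a dense open of $\cS$. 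Smallness follows numerically: $1<\tfrac12\cdot 3$ over $\cS$, and over the deeper strata the jump in fibre dimension remains below half the codimension.

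\textbf{Computation of $\rP(\widetilde{\cM})$.} The variety $\widetilde{\cM}$ is assembled from relative Grassmannians over $\Gr(4,5)$ and successive blow-ups along smooth centres that are projective-space bundles, hence admits a stratification by affine cells; its virtual Poincar\'e polynomial is therefore computed additively by repeated use of the blow-up formula $\rP(\mathrm{Bl}_B X)=\rP(X)+\rP(B)\bigl(t^{2}+t^{4}+\cdots+t^{2(c-1)}\bigr)$ with $c=\mathrm{codim}_X(B)$. One starts from $\bH_2(Y)$: by Proposition \ref{bidiagconic}, $\bS(Y)=\Gr(3,\cK)$ is a $\PP^3$-bundle over $\Gr(4,5)=\PP^4$, so $\rP(\bS(Y))=(1+t^2+t^4+t^6)(1+t^2+t^4+t^6+t^8)$, and blowing up $\PP^1\sqcup\PP^1$ followed by the contraction to $\PP^5$ yields
\[
\rP(\bH_2(Y))=1+3t^{2}+6t^{4}+7t^{6}+7t^{8}+6t^{10}+3t^{12}+t^{14}.
\]
Tracking the difference between $\widetilde{\cM}$ and $\bH_2(Y)$ --- both are models of the space of conics in $Y$, differing over the double-line locus $\bD(Y)$ and over the further loci resolving the deeper singular strata --- and feeding in $\rP(\bH_1(Y))=1+2t^2+3t^4+2t^6+t^8$, $\rP(\bD(Y))$ (obtained from $\bD(Y)\leftarrow\widetilde{\bD}(Y)\to\bar{\bD}(Y)$ and the $\PP^1$-fibration $\bar{\bD}(Y)\to Q_3$), $\rP(Z)$, each weighted by the relevant $\PP^1$- and $\PP^2$-fibre classes, gives
\[
\rP(\widetilde{\cM})=1+4t^{2}+10t^{4}+15t^{6}+15t^{8}+10t^{10}+4t^{12}+t^{14},
\]
which by the small-map criterion is $\rIP(\cM(Y,2))$.

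\textbf{Main obstacle.} The heart of the argument is the local analysis of $\cM(Y,2)$ along its singular locus: one must pin down, uniformly along each stratum, the transverse singularity type --- so as to guarantee that a small (non-divisorial) resolution exists and that the strata-wise resolutions glue into one smooth model sitting inside the relative complete-conics space --- with the deeper strata, where non-free lines meet, being the most delicate both for the dimension count and for the singularity type. This refines, but is considerably heavier than, the chart computations already present in the proof of Theorem \ref{mainthm}.
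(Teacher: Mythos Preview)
Your strategy---produce a smooth small resolution of $\cM(Y,2)$ and compute its Poincar\'e polynomial via the small-map criterion---is the paper's strategy. But you take a much harder route to the resolution, and the step you flag as the ``main obstacle'' (the local singularity analysis of $\cM(Y,2)$: identifying conifold transverse slices, handling deeper strata, gluing small resolutions) is precisely what the paper avoids.

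The paper's resolution is simply $\widetilde{\bH}_2(Y):=\mathrm{Bl}_{\bD(Y)}\bH_2(Y)$. Its smoothness is immediate: $\bH_2(Y)$ is smooth (Proposition~\ref{bidiagconic}) and the centre $\bD(Y)$ is a smooth subvariety (Theorem~\ref{mainthm}). The morphism $\pi:\widetilde{\bH}_2(Y)\to\cM(Y,2)$ comes from the standard Hilbert-scheme/stable-maps comparison for conics (complete conics carry a tautological family of stable maps), for which the paper cites \cite{Chu19}; no analysis of $\mathrm{Sing}\,\cM(Y,2)$ is required. The smallness check is then one line: $\pi$ is a bijection off the exceptional divisor (smooth and nodal conics coincide in both moduli), and on the exceptional divisor the fibre over a stable map with image a line $L$ is $\PP\bigl(\rH^0(N_{L/Y}(-1))\bigr)$---a point if $L$ is free, a $\PP^1$ if $L$ is non-free. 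By Corollary~\ref{mainprop} the non-free locus $Z\subset\bH_1(Y)$ is a surface, so positive-dimensional fibres sit over a $\PP^2$-bundle over $Z$ (codimension $3$) and have dimension $1<\tfrac32$. Your ``deeper strata'' (pairs of non-free lines with coinciding bad normal directions) lie in the nodal-conic locus, where $\pi$ is bijective on closed points; they may contribute to $\mathrm{Sing}\,\cM(Y,2)$ but are irrelevant for smallness of $\pi$.

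Once this is in place the computation collapses to the single blow-up formula
\[
\rP(\widetilde{\bH}_2(Y))=\rP(\bH_2(Y))+(t^{2}+t^{4})\cdot\rP(\bD(Y)),
\]
with $\rP(\bH_2(Y))$ as you computed and $\rP(\bD(Y))$ read off from the proof of Theorem~\ref{mainthm}. So your outline is not wrong, but it trades an immediate consequence of the main theorem for a delicate and unjustified local analysis that the paper never needs.
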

\begin{proof}
By the same method of the proof of Theorem 1.2 in \cite{Chu19}, one can show that the blow-up $\widetilde{\bH}_2(Y)$ of $\bH_2(Y)$ along $\bD(Y)$ is smooth one and thus we have a birational morphism
\[
\pi: \bH_2(Y)\lr \cM(Y,2)
\]
such that the exceptional divisor (i.e., $\PP^2$-bundle over $\bD(Y)$) contracts to a $\PP^2$-bundle over $\bH_1(Y)$. From Corollary \ref{maincor}, the map $\pi$ is a small map and thus $\rP(\widetilde{\bH}_2(Y))=\rIP(\cM(Y,2))$. By the equality
\[
\rP(\widetilde{\bH}_2(Y))=\rP(\bH_2(Y))+(\rP(\PP^2)-1)\cdot \rP(\bD(Y)),
\]
Proposition \ref{linespace} and Corollary \ref{maincor}, we obtain the result.
\end{proof}


\begin{thebibliography}{CHK12}

\bibitem[CHK12]{CHK12}
Kiryong Chung, Jaehyun Hong, and Young-Hoon Kiem.
\newblock Compactified moduli spaces of rational curves in projective
  homogeneous varieties.
\newblock {\em J. Math. Soc. Japan}, 64(4):1211--1248, 2012.

\bibitem[CHL18]{CHL18}
Kiryong Chung, Jaehyun Hong, and SangHyeon Lee.
\newblock Geometry of moduli spaces of rational curves in linear sections of
  grassmannian gr(2,5).
\newblock {\em Journal of Pure and Applied Algebra}, 222(4):868 -- 888, 2018.

\bibitem[Chu22]{Chu19}
Kiryong Chung.
\newblock Desingularization of kontsevich's compactification of twisted cubics
  in $v_5$.
\newblock {\em manuscripta mathematica}, 2022.

\bibitem[CK11]{CK11}
Kiryong Chung and Young-Hoon Kiem.
\newblock Hilbert scheme of rational cubic curves via stable maps.
\newblock {\em Amer. J. Math.}, 133(3):797--834, 2011.

\bibitem[CM17]{CM17}
Kiryong Chung and Han-Bom Moon.
\newblock {Mori's Program for the Moduli Space of Conics in Grassmannian}.
\newblock {\em Taiwanese Journal of Mathematics}, 21(3):621 -- 652, 2017.

\bibitem[FN72]{FN71}
Akira Fujiki and Shigeo Nakano.
\newblock Supplement to ``{O}n the inverse of monoidal transformation''.
\newblock {\em Publ. Res. Inst. Math. Sci.}, 7:637--644, 1971/72.

\bibitem[FP97]{FP97}
W.~Fulton and R.~Pandharipande.
\newblock Notes on stable maps and quantum cohomology.
\newblock In {\em Algebraic geometry---{S}anta {C}ruz 1995}, volume~62 of {\em
  Proc. Sympos. Pure Math.}, pages 45--96. Amer. Math. Soc., Providence, RI,
  1997.

\bibitem[GS]{M2}
Daniel~R. Grayson and Michael~E. Stillman.
\newblock Macaulay2, a software system for research in algebraic geometry.
\newblock Available at http://www.math.uiuc.edu/Macaulay2/.

\bibitem[KPS18]{KPS18}
Alexander~G. Kuznetsov, Yuri~G. Prokhorov, and Constantin~A. Shramov.
\newblock Hilbert schemes of lines and conics and automorphism groups of fano
  threefolds.
\newblock {\em Japan. J. Math.}, pages 685--789, 2018.

\bibitem[Max18]{Max18}
Laurentiu~G. Maxim.
\newblock Intersection homology and perverse sheaves, with applications to
  singularities.
\newblock {\em Book Project}, 2018.

\bibitem[PCS19]{PCS19}
V.~V. Przyjalkowski, I.~A. Cheltsov, and K.~A. Shramov.
\newblock Fano threefolds with infinite automorphism groups.
\newblock {\em Izvestiya: Mathematics}, 83(4):860--907, aug 2019.

\bibitem[Pro94]{Pro94}
Yuri Prokhorov.
\newblock Compactifications of $\mathbb{C}^4$ of index $3$.
\newblock {\em In Algebraic geometry and its applications (Yaroslavl' , 1992)
  Aspects Math.}, E25:159--169, Vieweg, Braunschweig, 1994.

\bibitem[PZ16]{PZ16}
Yuri Prokhorov and Mikhail Zaidenberg.
\newblock Examples of cylindrical fano fourfolds.
\newblock {\em European Journal of Mathematics}, 2(1):262--282, 2016.

\end{thebibliography}

\end{document}